\newcommand{\R}[1][]{{\mathbb{R}^{#1}}}
\newcommand{\haus}[1][\alpha]{\operatorname{V}_{#1}}
\numberwithin{equation}{section}
\theoremstyle{plain}
\newtheorem{thm}{Theorem}[section]
\newtheorem{lem}[thm]{Lemma}
\newtheorem{conj}[thm]{Conjecture}
\theoremstyle{definition}
\newtheorem{define}[thm]{Definition}
\newtheorem{ex}[thm]{Example}
\newtheorem{prob}[thm]{Problem}
\theoremstyle{remark}
\newtheorem{remark}[thm]{Remark}
\title{Non-symmetric Convex Polytopes and Gabor orthonormal bases}
\address{Department of Mathematics, San Francisco State University, San Francisco, California 94132}
\author{Randolf Chung}
\author{Chun-kit Lai}
\email{\href{mailto:university@jeongjh.com}{university@jeongjh.com}}
\email{\href{mailto:cklai@sfsu.edu}{cklai@sfsu.edu}}
\thanks{}
\subjclass[2010]{42C15,52B11}
\date{}
\keywords{Gabor orthonormal bases, Polytope, Spectral Sets, Tilings}
\begin{document}
\begin{abstract}
	In this paper, we show that  non-symmetric convex polytopes cannot serve as a window function to produce a Gabor orthonormal basis by any time-frequency sets.
\end{abstract}
\maketitle
\tableofcontents
\section{Introduction}

Let $\Omega$ be a subset of $\R[d]$ with $|\Omega|>0$ ($|\cdot|$ denotes the Lebesgue measure). If $\Gamma$ is a discrete subset of $\R[d]$, we write $E_\Gamma$ for the set of exponentials $\{e_\gamma(x):\gamma\in \Gamma\}$ where $e_\gamma(x):=e^{2\pi i \langle \gamma,x\rangle}$ for $x\in\R[d]$.
\begin{define}
	Let $g\neq 0$ be a function in $L^2(\R[d])$ and let $\Lambda=\{(t,\lambda): t,\lambda\in\R[d]\}$ be a discrete subset of $\R[2d]$. A {\it Gabor system} is a collection of translations and modulations of the function $g$ by $\Lambda$:
	\begin{equation}\label{Gaborsystem}
		\mathcal{G}(g,\Lambda):=\{e_\lambda(x)g(x-t): (t,\lambda)\in\Lambda\}.
	\end{equation}
    In particular, a measurable set $\Omega\subseteq \mathbb{R}^d$ is called a {\it Gabor orthonormal basis set (GONB set)} if ${\mathcal G}(|\Omega|^{-1/2}\chi_{\Omega},\Lambda)$ forms an orthonormal basis for $L^2(\R[2d])$.
\end{define}
We call $g$ and $\Lambda$ the {\it window function} and the {\it time-frequency set} respectively. $\Lambda$ is said to be {\it separable} if there exists sets ${\mathcal J}$ and $\Gamma$ on $\R[d]$ such that $\Lambda = {\mathcal J}\times \Gamma$.

\medskip

In recent years, determining a pair $(g,\Lambda)$ such that $\mathcal{G}(g,\Lambda)$ arises as a frame or orthonormal bases has received much attention and many important cases have been solved. Yet, there is still an abundance of mysteries and unexpected results within this classification (for example, see \cite{Grochenig2001,Grochenig2014}). Concerning the structure of GONB sets,  the following problem may give us some positive insight. It was recently proposed and studied by several authors \cite{Agora2017,Iosevich2017,Gabardo2015,Lai}.

\medskip
\begin{prob}[Fuglede-Gabor problem]
	Suppose $\Omega\subseteq \R^d$ is a GONB set. Then
	\begin{enumerate}[align=left]
		\item (Spectrality) there exists $\Gamma$ such that $E_\Gamma$ forms an orthonormal basis for $L^2(\Omega)$, and
		\item (Tiling) there exists a discrete set ${\mathcal J}$ such that $\R[d]$ is the almost disjoint union of $\Omega+t, t\in {\mathcal J}$. Equivalently,
		      $$\sum_{t\in{\mathcal J}} \chi_{\Omega}(x-t) = 1 \ \mbox{a.e.}$$
	\end{enumerate}
\end{prob}
In general, sets satisfying (1) and (2) are called {\it spectral sets} and {\it translational tiles} respectively.
Historically, the first related version of the Fuglede-Gabor problem was introduced in \cite{Liu2003}. They conjectured that if the window functions were compactly-supported and the time-frequency sets were separable, then the conclusion of the Fuglede-Gabor problem holds.  Due to the separability condition, the problem was settled by \cite{Dutkay2014} if the window was non-negative. Our interest is the non-separable case. In fact, considering standard objects such as the unit cube $[0,1]^d$, there exist uncountably many distinct (up to translation) non-separable time-frequency sets $\Lambda$ such that ${\mathcal G}(\chi_{[0,1]^d},\Lambda)$ forms an orthonormal basis if $d\ge 2$ (see \cite{Gabardo2015}).

\medskip

The Fuglede-Gabor problem is motivated by a related conjecture called the {\it spectral set conjecture}:

\begin{conj}[Spectral set conjecture]
	A set $\Omega$ is a spectral set if and only if it is a translational tile.
\end{conj}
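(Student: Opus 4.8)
The plan is to translate both notions into a single Fourier-analytic condition on the zero set of $\widehat{\chi_{\Omega}}$ and then exploit the resulting symmetry between the space and frequency sides. Writing $Z=\{\xi\in\R[d]:\widehat{\chi_{\Omega}}(\xi)=0\}$, I would first record the two standard characterizations. A set $\Gamma$ is a spectrum for $\Omega$ (that is, $E_{\Gamma}$ is an orthogonal basis for $L^{2}(\Omega)$) precisely when
\[
\sum_{\gamma\in\Gamma}\bigl|\widehat{\chi_{\Omega}}(\xi-\gamma)\bigr|^{2}=|\Omega|^{2}\qquad\text{for a.e. }\xi\in\R[d],
\]
which forces the orthogonality condition $(\Gamma-\Gamma)\setminus\{0\}\subseteq Z$ together with a completeness (Parseval) identity. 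Dually, a set $\mathcal{J}$ is a tiling complement for $\Omega$ exactly when
\[
\sum_{t\in\mathcal{J}}\chi_{\Omega}(x-t)=1\qquad\text{for a.e. }x\in\R[d],
\]
which splits into a packing condition (almost-disjointness of the translates) and a covering condition (density one). The guiding principle is that packing and covering in space mirror orthogonality and completeness in frequency, and that both families of data are tied to the \emph{same} zero set $Z$.

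The model case, which I would establish first, is when the relevant set is a lattice. If $\Omega+L$ tiles $\R[d]$ for a lattice $L$, then by Poisson summation the tiling identity is equivalent to $|\Omega|=\operatorname{covol}(L)$ together with $\widehat{\chi_{\Omega}}(\ell^{*})=0$ for every $\ell^{*}\in L^{*}\setminus\{0\}$, where $L^{*}$ denotes the dual lattice; but this is exactly the condition that makes $L^{*}$ a spectrum for $\Omega$. Reversing the computation yields the converse, so for lattice data the biconditional holds with the explicit duality $L\leftrightarrow L^{*}$. This recovers Fuglede's original equivalence and shows that a single zero set $Z$ simultaneously supports a tiling complement and a spectrum whenever either one is a lattice.

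To reach the general statement I would attempt to reduce arbitrary tilings and spectra to this lattice model. First I would try to show that a tiling complement (respectively a spectrum) may be taken periodic, passing to the torus $\R[d]/L$ and thereby converting the problem into a finite abelian-group version of the conjecture, where spectrality and tiling can be analysed combinatorially through vanishing sums of roots of unity and the divisor structure of the group. Solving the finite model and then lifting the resulting dual object back to $\R[d]$ would, if it went through, manufacture from any tiling complement a spectrum and conversely, closing both directions.

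The main obstacle is precisely this passage from the lattice (or periodic) model to genuinely aperiodic data. The two objects $\mathcal{J}$ and $\Gamma$ sit on opposite sides of the Fourier transform and are linked only through the common zero set $Z$, and outside the lattice case $Z$ does \emph{not} canonically determine one from the other: there is no known construction that produces a spectrum out of an arbitrary tiling complement, nor a tiling complement out of an arbitrary spectrum. Compounding this, the reduction to periodic structures is itself delicate and cannot be taken for granted. Bridging this gap — building the missing dual object in the absence of any group structure — is the step I expect to resist the techniques above and to demand a genuinely new idea.
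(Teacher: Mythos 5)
There is a fundamental problem here that goes beyond a gap in the argument: the statement you are trying to prove is a \emph{conjecture}, quoted in the paper purely as motivation, and --- as the paper's own introduction records --- it is known to be \emph{false} in full generality. Tao \cite{Tao2003} disproved the direction ``spectral $\Rightarrow$ tile'' for $d\geq 5$, and Kolountzakis and Matolcsi \cite{Kolountzakis2006} disproved both directions for $d\geq 3$; the conjecture remains open only for $d=1,2$ and for restricted classes of sets (lattice tiles, unions of two intervals, convex bodies, etc.). So no proof of the biconditional as stated can exist, and there is no proof in the paper against which to compare yours. Your preliminary reductions are themselves sound: the tiling identity $\sum_{t\in\mathcal{J}}\chi_{\Omega}(x-t)=1$ a.e., the spectral identity $\sum_{\gamma\in\Gamma}|\widehat{\chi_{\Omega}}(\xi-\gamma)|^{2}=|\Omega|^{2}$ a.e., and the lattice case via Poisson summation (tiling by $L$ iff $L^{*}$ is a spectrum) are all standard and correct --- the last is Fuglede's own theorem \cite{Fuglede1974}, which is case (1) in the paper's list.

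The decisive failure is located exactly where you flag uncertainty, but the situation is worse than ``resistant to these techniques'': your proposed reduction to finite abelian groups is precisely the conduit through which the counterexamples were \emph{constructed}. Tao produced a spectral set in a finite group $(\mathbb{Z}_{3})^{5}$ that is not a tile and lifted it to a union of cubes in $\mathbb{R}^{5}$; Kolountzakis and Matolcsi ran analogous finite-group constructions for the converse direction. So the passage ``solve the finite model, then lift'' transfers counterexamples to $\mathbb{R}^{d}$ rather than proofs, and the finite-group version of the conjecture is itself false. Moreover, the preliminary step of assuming a tiling complement or spectrum may be taken periodic is unjustified even in low dimensions (periodicity of tilings is known on $\mathbb{R}^{1}$, but the analogous structure theory for spectra is a serious open problem, not a routine reduction). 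If you want a true statement adjacent to your plan, you should restrict to one of the verified classes: your lattice argument already handles case (1), and the paper's actual contribution lives in the Gabor analogue of case (4), non-symmetric convex polytopes, where the correct conclusion is the one-sided statement that such sets are neither spectral nor GONB sets --- not the full biconditional.
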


This conjecture was introduced by Fuglede \cite{Fuglede1974} during his studies of extensions of commuting self-adjoint differential operators to dense subspaces of $L^2(\Omega)$. His conjecture was  disproven in one direction by Tao \cite{Tao2003} for $d\geq 5$ and then in both directions by Kolountzakis and Matolcsi \cite{Kolountzakis2006} for $d\geq 3$. Despite this however, the conjecture was verified in many significant cases including the following:

\begin{enumerate}
	\item $\Omega$ tiles by a lattice \cite{Fuglede1974},
	\item $\Omega$ is a union of two intervals on ${\mathbb R}^1$ \cite{Laba2000},
	\item $\Omega$ is a convex body with a point of positive Gaussian curvature \cite{Iosevich2001},
	\item $\Omega$ is a non-symmetric convex body \cite{Kolountzakis2000}.
\end{enumerate}

The first three cases have partially been resolved recently in the Fuglede-Gabor problem (see \cite{Lai} for  case (1), \cite{Agora2017} for case (2), and \cite{Iosevich2017} for case (3)). Each case used machinery similar to its Fuglede counterpart's, but due to the extra consideration of the set $\Omega\cap(\Omega+t)$, none of the cases were proven in their full generality.

\medskip

In this paper, we consider the fourth case with non-symmetric convex polytopes. Our main result is

\begin{thm}\label{main1}
	Let $\Omega$ be a non-symmetric convex polytope in ${\mathbb R}^{d}$. Then $\Omega$ is not a GONB set. In other words, there cannot exist a $\Lambda$ such that ${\mathcal G}(|\Omega|^{-1/2}\chi_{\Omega},\Lambda)$ forms an orthonormal basis.
\end{thm}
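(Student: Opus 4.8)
The plan is to argue by contradiction: assume $\Omega$ is a non-symmetric convex polytope together with a time-frequency set $\Lambda$ for which $\mathcal{G}(|\Omega|^{-1/2}\chi_{\Omega},\Lambda)$ is an orthonormal basis, and extract from this enough structure to force $\Omega$ to be centrally symmetric. Writing $g=|\Omega|^{-1/2}\chi_{\Omega}$ and computing
\begin{equation*}
\langle e_\lambda g(\cdot-t),\,e_{\lambda'}g(\cdot-t')\rangle=\frac{1}{|\Omega|}\int_{(\Omega+t)\cap(\Omega+t')}e_{\lambda-\lambda'}(x)\,dx,
\end{equation*}
the orthonormality splits into two families of conditions. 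Let $\mathcal{T}$ be the projection of $\Lambda$ onto the time variable and $\Lambda^t=\{\lambda:(t,\lambda)\in\Lambda\}$ the fibers. Taking $t=t'$ shows $\widehat{\chi_{\Omega}}(\lambda-\lambda')=0$ for distinct $\lambda,\lambda'\in\Lambda^t$, i.e.\ each $E_{\Lambda^t}$ is an orthogonal set of exponentials in $L^2(\Omega)$; taking $t\neq t'$ and setting $s=t'-t$ gives $\widehat{\chi_{\Omega\cap(\Omega+s)}}(\lambda-\lambda')=0$, a vanishing condition on the Fourier transform of the overlap $\Omega\cap(\Omega+s)$.

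The completeness half of the basis property reads $\|f\|^2=|\Omega|^{-1}\sum_{t\in\mathcal{T}}\sum_{\lambda\in\Lambda^t}\bigl|\widehat{f\chi_{\Omega+t}}(\lambda)\bigr|^2$ for every $f\in L^2(\R[d])$. Applying Bessel's inequality fiberwise (each $\{|\Omega|^{-1/2}e_\lambda\}_{\lambda\in\Lambda^t}$ is orthonormal in $L^2(\Omega+t)$) yields $\|f\|^2\le\int_{\R[d]}|f|^2\,M$ with $M(x)=\sum_{t\in\mathcal{T}}\chi_{\Omega+t}(x)$; since $M$ is integer-valued this forces $M\ge 1$ a.e., so the translates $\{\Omega+t\}_{t\in\mathcal{T}}$ cover $\R[d]$. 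The aim is then to upgrade this covering to a genuine tiling, equivalently to show that every fiber $\Lambda^t$ is complete in $L^2(\Omega)$ (a spectrum) and that the overlaps $\Omega\cap(\Omega+s)$ are null for the relevant $s$. Once $\Omega$ is known either to tile $\R[d]$ by translations or to be a spectral set, the conclusion is immediate: a convex polytope that tiles by translations is centrally symmetric (the classical McMullen--Venkov theorem), and a non-symmetric convex body cannot be spectral \cite{Kolountzakis2000}. Either route contradicts the non-symmetry of $\Omega$.

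The main obstacle is precisely the decoupling of the time-fibers in the presence of nonempty overlaps: completeness of the full Gabor system does not directly give completeness of a single fiber, since the cross terms $\widehat{f\chi_{(\Omega+t_0)\cap(\Omega+t)}}$ entangle the fibers and the fiberwise Bessel inequality may be strict. This is exactly the $\Omega\cap(\Omega+t)$ difficulty flagged for cases (1)--(3) above. To break it I would exploit the polytope structure through the boundary asymptotics of $\widehat{\chi_{\Omega}}$: along the outward unit normal $\nu$ of a facet $F$, the leading term of $\widehat{\chi_{\Omega}}(r\nu)$ as $r\to\infty$ is of order $1/r$ and is a signed sum over the facets whose normal is parallel to $\nu$. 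Non-symmetry guarantees a facet $F$ whose antipodal parallel facet is absent or of different area, so this leading coefficient cannot cancel, giving a genuine $1/r$ lower bound for $|\widehat{\chi_{\Omega}}(r\nu)|$ along a full ray. Quantifying the tension between this non-vanishing and the vast zero set forced by the fiber- and overlap-orthogonality relations $\widehat{\chi_{\Omega}}(\lambda-\lambda')=0$ and $\widehat{\chi_{\Omega\cap(\Omega+s)}}(\lambda-\lambda')=0$, and showing it is resolvable only when $\Omega=-\Omega$ up to translation, is where the real work lies. The same facet-asymptotic mechanism underlies Kolountzakis's spectral result \cite{Kolountzakis2000}; the novelty here is carrying it through the additional overlap terms produced by the translations in $\Lambda$.
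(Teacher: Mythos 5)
There is a genuine gap: your proposal correctly identifies the facet-asymptotic mechanism coming from non-symmetry, but the step you defer as ``where the real work lies'' is in fact the entire proof, and the reduction you sketch before it does not go through. Specifically, the route ``covering $\Rightarrow$ tiling (or spectrality) $\Rightarrow$ contradiction via Minkowski/McMullen--Venkov or Kolountzakis'' requires deducing tiling or spectrality from the GONB hypothesis, which is precisely the open Fuglede--Gabor problem that this result is designed to circumvent; the fiberwise Bessel argument gives only $M\ge 1$ a.e., and you yourself note that the overlap terms prevent upgrading this. The paper never establishes tiling or spectrality: it derives a contradiction directly from the mutual orthogonality relations and the relative density of $\Lambda$.

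Three concrete ingredients are missing from your sketch. First, the orthogonality conditions constrain $\widehat{\chi}_{\Omega\cap(\Omega+s)}$, not $\widehat{\chi}_{\Omega}$, so the $1/r$ lower bound must be proved for the overlap sets $\Omega_s=\Omega\cap(\Omega+s)$; this requires showing that $\Omega_s$ remains a non-symmetric polytope for all sufficiently small $|s|$ (a continuity argument for facet volumes under the Hausdorff metric; note this genuinely fails for large $s$ --- the intersection of a non-symmetric polytope with a translate can be symmetric, as in Example \ref{ex2.6}). Second, a lower bound along a single ray $r\nu$ is not enough: one needs $|V_gg(s,\lambda)|>0$ on an entire cylinder $S(2\delta)\setminus B_R(0)$ around the normal direction, with $\delta$, $R$ and the error-term constant all uniform in $|s|<\epsilon$ (Lemmas \ref{bound} and \ref{nonzero}). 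Third, one needs the final combinatorial step: mutual orthogonality forces $\Lambda-\Lambda$ to avoid the region where $V_gg\neq 0$, so $\Lambda$ meets each product set $B_{\epsilon/2}(x)\times(S(\delta)+\tau)$ in only finitely many points, and covering a suitable box by finitely many such sets contradicts the relative density of $\Lambda$ (which follows from completeness via the density theorem for Gabor frames). Without these three steps the ``tension'' you describe between the non-vanishing and the zero set is not quantified, and no contradiction is actually reached.
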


We are unable to generalize the proof in \cite{Kolountzakis2000} to obtain a more general result for convex bodies (see Remark \ref{remark_end}). Instead, we will follow a similar approach by Greenfeld and Lev \cite[Theorem 3.1]{Greenfeld2017} (originally from \cite{Kolountzakis2002}). To fully utilize the same line of thought, we will first consider the intersection of the polytopes $\Omega$ and its translate $\Omega+t$. We must assure that for a sufficiently small vector $t$, $\Omega\cap (\Omega+t)$ will remain non-symmetric with the $(d-1)$-volume of their facets staying continuous (Theorem \ref{epsi}). After that, we apply an analogous argument from Greenfeld-Lev twice on the frequency and time axes to obtain a similar contradiction.

\section{Lemmas on polytopes}

In this section, we study the structure of convex polytopes. Main references will be taken from \cite{Gruber2007,Schneider2013}. Let us recall some terminology.

\medskip

Let $\haus$ be the $\alpha$-dimensional volume function on $\R[d]$. A (closed) half-space $H$ is defined by $\{x\in \R[d]: \langle a,x\rangle \le b\}$ where $a$ is the normal vector to $H$.  A \textit{convex polyhedron} is a finite intersection of closed half-spaces; thus, a convex polyhedron $\Omega$ is a closed set admitting a {\it half-space representation}
\begin{equation}\label{polyhedra}
	\Omega = \{x\in{\mathbb R}^d: \langle a_i,x\rangle\le b_i, \ \forall i=1,...,n\}= \bigcap_{i=1}^n H_i,
\end{equation}
where $H_i = \{x\in{\mathbb R}^d: \langle a_i,x\rangle\le b_i\}.$ A {\it facet} $F_i$ of $\Omega$ is the intersection of $\Omega$ with the boundary of a half-space in its half-space representation; namely, $F_i = (\partial H_i)\cap \Omega$ such that $\haus[d-1](F_i)>0$.

\medskip

A {\it convex polytope} is the convex hull of finitely many points. It is well-known that a convex polytope is equivalent to a bounded polyhedron. A convex polytope is {\it (centrally) symmetric} if there exists a point $x\in\R[d]$ such that
$$
x-\Omega = \Omega-x.
$$
If $F=(\partial H)\cap \Omega$ is a facet of $\Omega$, then $F'=(\partial H')\cap \Omega$ is the {\it parallel} of $F$ if $(\partial H)\cap(\partial H')=\emptyset$ (i.e. $H$ and $H'$ share unit normal vectors in opposing directions). By convention, we take $\emptyset$ to be the parallel facet of $F$ if a parallel facet does not exist. The following theorem fully characterizes symmetric convex polytopes in terms of parallel facets and volume (see \cite[Corollary 18.1]{Gruber2007}):
\begin{thm}[Minkowski's Theorem]
	A convex polytope is symmetric if and only if for every facet $F\subset \Omega$, there exists a parallel facet $F'$ such that $\haus[d-1](F')=\haus[d-1](F)$.
\end{thm}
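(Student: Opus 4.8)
My plan is to prove the two implications separately; the forward direction is a short reflection argument, whereas the reverse direction carries all the content and is where I expect the real difficulty.

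For the forward direction I would argue as follows. Suppose $\Omega$ is symmetric about a point $c$; after a translation I may assume $c=0$, so that $-\Omega=\Omega$. If $F=(\partial H)\cap\Omega$ is a facet coming from $H=\{x:\langle a,x\rangle\le b\}$, then the reflection $x\mapsto -x$ carries $F$ to $-F$, which lies on the hyperplane $\{x:\langle a,x\rangle=-b\}$ and is a facet of $-\Omega=\Omega$ whose outer normal $-a$ points opposite to that of $F$. Thus $F':=-F$ is a parallel facet of $F$, and since $x\mapsto-x$ is an isometry it preserves $(d-1)$-volume, so $\haus[d-1](F')=\haus[d-1](F)$.

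For the reverse direction I would first repackage the hypothesis in terms of facet normals and facet volumes. Let $u_1,\dots,u_n$ be the outer unit normals of the facets $F_1,\dots,F_n$ of $\Omega$, and set $f_i=\haus[d-1](F_i)$. The reflected body $-\Omega$ is again a convex polytope, its facets are exactly $-F_1,\dots,-F_n$, its outer unit normals are $-u_1,\dots,-u_n$, and, reflection being volume-preserving, its facet volumes are again $f_1,\dots,f_n$. The hypothesis says precisely that whenever $u_i$ is an outer normal of $\Omega$ with facet volume $f_i$, the opposite direction $-u_i$ is also an outer normal of $\Omega$ with the same facet volume $f_i$. Hence $\Omega$ and $-\Omega$ assign identical $(d-1)$-volumes to identical sets of outer normal directions.

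The crux is then to upgrade this identical facet data to the conclusion that $-\Omega$ is a translate of $\Omega$. This is exactly Minkowski's uniqueness theorem for polytopes: a convex polytope in $\R[d]$ is determined, up to translation, by its set of outer unit facet normals together with the corresponding facet $(d-1)$-volumes. Applying it gives $-\Omega=\Omega+v$ for some $v\in\R[d]$, and then with $c=-v/2$ one checks $c-\Omega=\Omega-c$, so $\Omega$ is symmetric about $c$. The genuine obstacle here is the uniqueness input, which is not formal: the standard route proves it via the Minkowski mixed-volume inequality (or Brunn--Minkowski), whose equality case forces the two bodies to be homothetic, after which equality of total volumes forces the homothety ratio to be $1$, i.e.\ a pure translation. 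Quoting this theorem (as the cited Corollary 18.1 does) is the cleanest path; reproving just the special case needed would essentially reconstruct the same mixed-volume argument.
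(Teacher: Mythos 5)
The paper does not prove this statement; it is quoted directly from \cite[Corollary 18.1]{Gruber2007}, so there is no in-paper argument to compare against. Your outline is the standard derivation and is correct: the forward direction by reflecting about the center of symmetry, and the reverse direction by observing that the hypothesis forces $\Omega$ and $-\Omega$ to have the same surface area data (normals with multiplicities/volumes) and then invoking Minkowski's uniqueness theorem to get $-\Omega=\Omega+v$, hence central symmetry about $-v/2$. You correctly identify the uniqueness theorem as the genuinely non-formal input, which is in effect exactly what the paper's citation delegates to Gruber; both routes bottom out in the same classical result.
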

\medskip

Let ${\mathcal C}:={\mathcal C}[\R[d]]$ be the set of compact convex sets on $\R[d]$, and let $B_\delta(x)$ be the open ball of radius $\delta$ centered at $x$. We will denote by ${\mathcal P}:={\mathcal P}[\R[d]]$ the set of all polytopes in ${\mathcal C}$.

\medskip

For any $E,F\in{\mathcal C}$, the {\it Hausdorff metric} of $E$ and $F$ is defined as
$$
d_{H}(E,F) = \inf\{\delta: E\subset F^{\delta} \ \mbox{and} \ F\subset E^{\delta}\},
$$
where $E^{\delta} := \bigcup_{x\in E} B_\delta(x)$ and similarly for $F^{\delta}$. The metric space $(\mathcal C,d_{H})$ is complete.

\medskip

Now we remark that in general the volume function is not continuous for general compact sets.

\begin{ex}
	Let $T_0:=[v_1;v_2;v_3]$ denote a 2-simplex in $\R[2]$ with unit side lengths, and let $T_n:=[v_1;v_2;(1/n)v_3+(1-1/n)v_1]$ for $n>0$. $T_n$ converges to the line segment $L$ joining $v_1,v_2$, but of particular interest, we see the \textit{non-convex} sequence $\partial T_n$ converges to the line segment $L$ joining $v_1,v_2$. By triangle inequality, $\haus[1](\partial T_n)\geq 2$ while $\haus[1](L)=1$, so $\haus[1](\partial T_n)$ cannot converge to $\haus[1](L)$.
\end{ex}

\medskip

Nonetheless, $\haus[d-1]$ is continuous on ${\mathcal C}$. A quick way to see this can be found in \cite[p.104-105]{Gruber2007}. In summary, up to a constant, $\haus[d-1]$ computes the surface area of a facet and according to \cite[p.104-105]{Gruber2007},
$$
\haus[d-1](C) = k_dW_1(C),
$$
where $W_1$ is the quermassintegral of $C$ and $k_d>0$ is some constant dependent on the dimension. It is thus a continuous function on $({\mathcal C}, d_{H})$ (by \cite[Theorem 6.13]{Gruber2007}); hence,
\begin{equation}\label{continuity}
	\lim_{n\rightarrow\infty}{d_H}(E_n,F)  \ \Rightarrow \ \lim_{n\rightarrow\infty}\mbox{V}_{d-1}(E_n)  =\mbox{V}_{d-1}(F).
\end{equation}

\medskip

Our goal now is to show that $\Omega_t:=\Omega\cap (\Omega+t)$ is non-symmetric for $t$ small if $\Omega$ is non-symmetric.

\medskip

\begin{lem}\label{min}
	Let $t\in\R[d]$, and let $\Omega$ be given by \eqref{polyhedra}. Then $\Omega_t$ admits a representation
	\begin{equation}\label{eqnmin}
		\Omega_t=\bigcap_{i=1}^{n}M_i
	\end{equation}
	where
	$$
	M_i:=M_i(t)=\{x\in{\mathbb R}^d: \langle a_i,x\rangle\le m_i\} \quad \mbox{and}  \quad m_i:=\min\{b_i, b_i+\langle a_i,t\rangle\}.
	$$
\end{lem}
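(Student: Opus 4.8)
The plan is to reduce the statement to elementary manipulations of half-space representations. First I would compute the half-space representation of the translate $\Omega+t$. Since $y\in\Omega+t$ if and only if $y-t\in\Omega$, which by \eqref{polyhedra} means $\langle a_i,y-t\rangle\le b_i$ for all $i$, I can rearrange each inequality to $\langle a_i,y\rangle\le b_i+\langle a_i,t\rangle$. Thus $\Omega+t=\bigcap_{i=1}^n H_i'$, where $H_i':=\{x\in\R[d]:\langle a_i,x\rangle\le b_i+\langle a_i,t\rangle\}$. The point is simply that translating $\Omega$ by $t$ shifts each bounding offset $b_i$ by $\langle a_i,t\rangle$ while leaving the normal vectors $a_i$ unchanged.

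Next I would intersect. Writing $\Omega=\bigcap_{i=1}^n H_i$ and $\Omega+t=\bigcap_{i=1}^n H_i'$ and regrouping the intersection index by index gives
\[
\Omega_t=\Omega\cap(\Omega+t)=\Big(\bigcap_{i=1}^n H_i\Big)\cap\Big(\bigcap_{i=1}^n H_i'\Big)=\bigcap_{i=1}^n\bigl(H_i\cap H_i'\bigr).
\]
Because $H_i$ and $H_i'$ share the same normal vector $a_i$, they are nested half-spaces, so their intersection is exactly the one with the smaller right-hand side:
\[
H_i\cap H_i'=\{x\in\R[d]:\langle a_i,x\rangle\le\min\{b_i,\,b_i+\langle a_i,t\rangle\}\}=\{x\in\R[d]:\langle a_i,x\rangle\le m_i\}=M_i.
\]
Substituting this into the displayed intersection yields $\Omega_t=\bigcap_{i=1}^n M_i$, which is \eqref{eqnmin}.

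I expect no serious obstacle here, as the content is essentially bookkeeping; the two points requiring care are mild. First, one must keep the sign convention straight so that $b_i$ is shifted by $+\langle a_i,t\rangle$ (this is where the $-t$ in ``$y-t\in\Omega$'' is consumed). Second, I would note that the representation \eqref{eqnmin} is not claimed to be irredundant: when $\langle a_i,t\rangle=0$ we have $M_i=H_i$, and distinct indices may produce the same half-space, so some $M_i$ need not contribute a facet of $\Omega_t$. Since the lemma asserts only the set equality, this redundancy is harmless and requires no further treatment; the finer question of which $M_i$ survive as genuine facets is deferred to the subsequent results on the non-symmetry and facet volumes of $\Omega_t$.
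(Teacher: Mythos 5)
Your proposal is correct and follows essentially the same route as the paper: translate each half-space to get $H_i+t=\{x:\langle a_i,x\rangle\le b_i+\langle a_i,t\rangle\}$, regroup the intersection index by index, and observe that two nested half-spaces with the same normal intersect in the one with the smaller offset, giving $M_i$. The additional remark on possible redundancy of the representation is a harmless observation not needed for the set equality.
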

\begin{proof}
	Let $\Omega=\bigcap_{i=1}^n H_i$ where $H_i=\{x:\langle a_i,x\rangle\leq b_i\}$. We have
	$$
	H_i+t=\{x+t:\langle a_i,x\rangle\leq b_i\}=\{x:\langle a_i,x-t\rangle\leq b_i\}=\{x:\langle a_i,x\rangle\leq b_i+\langle a_i,t\rangle\}
	$$
	Let $m_i$ be defined as above.  Then it follows immediately that
	$$
	H_i\cap (H_i+t) =\{x:\langle a_i,x\rangle\leq m_i\} = M_i
	$$
	Since
	$$
	\Omega_t=\Omega\cap(\Omega+t) = \left(\bigcap_{i=1}^n H_i\right)\cap \left(\bigcap_{i=1}^n H_i+t\right)  = \bigcap_{i=1}^n (H_i \cap (H_i+t))=\bigcap_{i=1}^n M_i,
	$$
	this implies \eqref{eqnmin}.
\end{proof}

\medskip

The following lemma shows that the facet in $\Omega_t$ converges to the original facet $\Omega$ in Hausdorff metric.

\begin{lem}\label{continter}
	Let $\Omega\in\mathcal{P}$, and let $F=(\partial H)\cap \Omega$ be a facet of $\Omega$. Write $H = \{x\in{\mathbb R}^d: \langle a,x\rangle\le b\}$ and let $M(t)$ be as defined in Lemma \ref{min} for $H$. Then the facets $F(t)=(\partial M(t))\cap \Omega_t$ converges to $F$ as $t\to0$.
\end{lem}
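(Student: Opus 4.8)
The plan is to reduce the statement, which compares two facets lying in the distinct parallel hyperplanes $\partial H$ and $\partial M(t)$, to a convergence problem inside the single hyperplane $\partial H$. Writing $H=\{x\in\R[d]:\langle a,x\rangle\le b\}$ and, following Lemma \ref{min}, $M(t)=\{x\in\R[d]:\langle a,x\rangle\le m(t)\}$ with $m(t)=\min\{b,\,b+\langle a,t\rangle\}=b+\min\{0,\langle a,t\rangle\}$, the hyperplane $\partial M(t)$ is the translate of $\partial H$ by the vector $v_t:=\frac{b-m(t)}{|a|^{2}}\,a$, of length $|v_t|=(b-m(t))/|a|\to0$ as $t\to0$. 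Let $\pi_t$ denote translation by $v_t$, so $\pi_t(\partial M(t))=\partial H$. Since the Hausdorff distance between a bounded set and its translate is at most the length of the translation vector, the triangle inequality yields $d_H(F(t),F)\le|v_t|+d_H(\pi_t(F(t)),F)$, and it remains to prove $\pi_t(F(t))\to F$ inside $\partial H$.

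Next I would write both polytopes as intersections of half-spaces of $\partial H$ sharing the same normals. Taking $H=H_j$ in \eqref{polyhedra}, Lemma \ref{min} gives $\Omega_t=\bigcap_i M_i(t)$, and since $\partial M(t)\subset M_j(t)$ we get $F(t)=\partial M(t)\cap\Omega_t=\{x\in\partial M(t):\langle a_i,x\rangle\le m_i(t)\text{ for all }i\ne j\}$. Substituting $y=x+v_t$ turns each constraint into $\langle a_i,y\rangle\le m_i(t)+\langle a_i,v_t\rangle=:b_i(t)$, so that $\pi_t(F(t))=\{y\in\partial H:\langle a_i,y\rangle\le b_i(t),\ i\ne j\}$, whereas $F=\{y\in\partial H:\langle a_i,y\rangle\le b_i,\ i\ne j\}$. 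Because $m_i(t)=b_i+\min\{0,\langle a_i,t\rangle\}\to b_i$ and $v_t\to0$, we have $b_i(t)\to b_i$ for every $i\ne j$: the two polytopes are cut out in $\partial H\cong\R[d-1]$ by half-spaces with fixed normals and with right-hand sides that converge.

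The conclusion then follows from the continuity of a bounded, full-dimensional polytope under perturbation of the constants in its half-space representation. Fix a closed ball $B\subset\partial H$ whose interior contains $F$; since $F(t)\subset\Omega_t\subset\Omega$ and $|v_t|\to0$, all the sets $\pi_t(F(t))$ also lie in $B$ once $t$ is small (their recession cone is governed by the normals $a_i$ alone, so they stay bounded). Writing $G_i=\{y\in\partial H:\langle a_i,y\rangle\le b_i\}$ and $G_i(t)=\{y\in\partial H:\langle a_i,y\rangle\le b_i(t)\}$, each truncated half-space satisfies $G_i(t)\cap B\to G_i\cap B$ in the Hausdorff metric as $b_i(t)\to b_i$. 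Because $F$ is a facet it is full-dimensional in $\partial H$, so its relative interior is nonempty and lies in the interior of each active $G_i$; this nondegeneracy permits forming the intersection $\bigcap_{i\ne j}(G_i(t)\cap B)$ one half-space at a time and passing to the limit (a standard property of the Hausdorff metric on convex bodies; see \cite{Schneider2013}), giving $\pi_t(F(t))=B\cap\bigcap_{i\ne j}G_i(t)\to B\cap\bigcap_{i\ne j}G_i=F$. Together with $|v_t|\to0$ this proves $d_H(F(t),F)\to0$.

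The step I expect to be the main obstacle is exactly this last one: intersections of convex bodies are not continuous in general, failing precisely when the bodies meet in a set of deficient dimension, so the argument genuinely uses that $F$ is a bona fide facet with $\haus[d-1](F)>0$. A secondary point needing care is a constraint whose normal $a_i$ is parallel to $a$: on $\partial H$ such a constraint is constant, but for a true facet it is satisfied strictly on $F$, hence stays inactive for small $t$ and does not affect the limit; the same reasoning shows redundant constraints remain harmless. If one prefers to bypass intersection-continuity, the inclusions $\pi_t(F(t))\subset F^{\varepsilon}$ and $F\subset\pi_t(F(t))^{\varepsilon}$ can instead be verified directly by projecting points onto the perturbed constraint hyperplanes, where once more the full-dimensionality of $F$ supplies the uniform margins that keep the projected points inside the respective polytopes.
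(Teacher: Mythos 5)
Your proposal is correct, but it proves the lemma by a genuinely different route than the paper. The paper applies the sequential characterization of Hausdorff convergence (\cite[Theorem 1.8.8]{Schneider2013}): it checks that every point of $F$ is a limit of points of $F(t)$ and that limits of points $x_{t_i}\in F(t_i)$ land in $\partial H\cap\Omega=F$, the latter by passing to the limit in the defining equation $\langle a,x_{t_i}\rangle=\min\{b,b+\langle a,t_i\rangle\}$. You instead translate $\partial M(t)$ back onto the fixed hyperplane $\partial H$ by $v_t$, rewrite $\pi_t(F(t))$ and $F$ as intersections of half-spaces of $\partial H$ with fixed normals and converging offsets $b_i(t)\to b_i$, and invoke continuity of intersections of convex bodies under the common-interior-point hypothesis (\cite[Theorem 1.8.10]{Schneider2013}), with the full-dimensionality of the facet ($\haus[d-1](F)>0$) supplying the required relative interior point that makes each constraint strict. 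Your version is longer and leans on a heavier standard theorem, but it buys something concrete: the paper's verification of its condition (1) rests on the assertion ``$x+t\in F(t)$,'' which fails when $\langle a,t\rangle>0$ (then $m=b$, so $\partial M(t)=\partial H$ while $\langle a,x+t\rangle>b$), whereas your argument never needs to exhibit explicit approximating points and so sidesteps this. You also correctly identify the one place where the argument genuinely uses that $F$ is a facet rather than a lower-dimensional face --- intersection of convex bodies is not continuous without the nondegeneracy hypothesis --- and your handling of constraints with $a_i$ parallel to $a$ and of the uniform boundedness needed to choose the ball $B$ (take $B$ large enough to contain $\Omega^{1}\cap\partial H$) is adequate.
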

\begin{proof}
	By \cite[Theorem 1.8.8]{Schneider2013}, a sequence of compact convex sets $K_i$ converges to $K$ if and only if
\begin{enumerate}
  \item every point $x\in K$ is the limit of some sequence of points $\{x_i\}, x_i\in K_i$.
  \item for any convergent sequences $(x_{i_j})$ with $x_{i_j}\in K_{i_j}$, the limit of $x_{i_j}$ belongs to $K$
\end{enumerate}
$(1)$ is clear since $x+t\to x$ as $|t|\to 0$ and $x+t\in F(t)$. For $(2)$, choose any convergent sequence $(x_{t_i})$ with $x_{t_i}\in F(t_i)$ and denote its limit by $x$. Then Lemma \ref{min} implies that
	$$
	\partial M(t) = \{x:  \langle a,x\rangle =\min\{b,b+\langle a,t\rangle\}\}.
	$$
	Now, $x_{t_i}\in \partial M(t_i)$, so $\langle a,x_{t_i}\rangle =  \min\{b,b+\langle a,t_i\rangle\}$. But $t_i$ converges to 0 by the continuity of $\langle \cdot,\cdot\rangle$ and $\min$, so $\langle a,x\rangle =  b$. In other words, $x\in \partial H$. On the other hand, $x\in \Omega_t = \Omega\cap (\Omega+t)\subset \Omega$, so $x\in F$. This completes the proof.
\end{proof}

\begin{thm}\label{epsi}
	Suppose $\Omega$ is a non-symmetric polytope. Then there exists $\epsilon>0$ such that for all $|t|\leq \epsilon$, $\Omega_t$ is non-symmetric. More specifically, given a non-symmetric facet $F$ in $\Omega$, $F(t)$ is a non-symmetric facet for $\Omega_t$ for $|t|\leq \epsilon$.
\end{thm}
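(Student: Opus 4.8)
The plan is to reduce the entire statement to Minkowski's Theorem together with the two continuity facts already in hand, namely Lemma \ref{continter} and the volume continuity \eqref{continuity}. First I would assume, without loss of generality, that the representation \eqref{polyhedra} of $\Omega$ is irredundant, so that the normals $a_1,\dots,a_n$ are precisely the outer normals of the facets of $\Omega$, one per facet. Since $\Omega$ is non-symmetric, Minkowski's Theorem supplies a facet $F=(\partial H)\cap\Omega$, with normal $a$ say, whose parallel facet $F'$ satisfies $\haus[d-1](F)\neq\haus[d-1](F')$. Here I adopt the convention that $F'=\emptyset$ and $\haus[d-1](F')=0$ when no parallel facet exists, so that this single inequality captures both the missing-parallel case and the unequal-volume case at once.

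The key structural observation is that $\Omega_t=\bigcap_{i=1}^n M_i(t)$ by Lemma \ref{min}, and each $M_i(t)$ carries the \emph{same} normal $a_i$ as $H_i$. Consequently the facet normals of $\Omega_t$ form a subset of $\{a_1,\dots,a_n\}$ for every $t$, i.e.\ exactly the facet normals available to $\Omega$. I would exploit this in two ways. If $F$ has no parallel facet, then no $a_i$ points opposite to $a$, so $\Omega_t$ can have no facet with outer normal in the direction $-a$ either; thus the parallel facet $F'(t)$ of $F(t)$ in $\Omega_t$ is empty for every $t$, and $\haus[d-1](F'(t))=0$ trivially converges to $\haus[d-1](F')=0$. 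If instead $F'$ is a genuine facet, it is the unique facet of $\Omega$ with normal $-a$, and Lemma \ref{continter} applied to $F'$ gives that its companion $F'(t)$, the unique facet of $\Omega_t$ with normal $-a$ and hence precisely the parallel of $F(t)$, converges to $F'$ in the Hausdorff metric.

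With this in place the conclusion follows by a limiting argument. Lemma \ref{continter} applied to $F$ gives $F(t)\to F$, and applied to $F'$ (or else the trivial empty case) gives $F'(t)\to F'$; by \eqref{continuity} we obtain $\haus[d-1](F(t))\to\haus[d-1](F)$ and $\haus[d-1](F'(t))\to\haus[d-1](F')$. In particular $\haus[d-1](F(t))\to\haus[d-1](F)>0$, so $F(t)$ is a genuine facet of $\Omega_t$ for $t$ small. If no admissible $\epsilon$ existed, there would be a sequence $t_k\to 0$ with $\haus[d-1](F(t_k))=\haus[d-1](F'(t_k))$, and passing to the limit would force $\haus[d-1](F)=\haus[d-1](F')$, contradicting the choice of $F$. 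Hence there is $\epsilon>0$ such that for all $|t|\le\epsilon$ the parallel facets $F(t)$ and $F'(t)$ of $\Omega_t$ have different $(d-1)$-volume, so $F(t)$ is a non-symmetric facet, and Minkowski's Theorem then yields that $\Omega_t$ is non-symmetric.

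The step I expect to be the main obstacle is the no-parallel case: a priori one must rule out the possibility that a brand-new facet of $\Omega_t$ sprouts in the direction $-a$ as $t$ moves away from $0$, which would spoil the claim that $F(t)$ loses its parallel. The observation that $\Omega_t$ inherits its half-space normals verbatim from $\Omega$ via Lemma \ref{min} is exactly what removes this danger, since it pins the admissible facet normals of $\Omega_t$ to the finite set $\{a_1,\dots,a_n\}$ for every $t$; this is the one place where the precise form of $M_i(t)$, rather than mere Hausdorff proximity of $\Omega_t$ to $\Omega$, is essential.
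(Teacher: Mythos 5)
Your proposal is correct and follows essentially the same route as the paper: pick a facet $F$ violating Minkowski's condition, apply Lemma \ref{continter} and \eqref{continuity} to get continuity of $\haus[d-1](F(t))$ and $\haus[d-1](F'(t))$ at $t=0$, and conclude that the volume discrepancy persists for small $|t|$ (your sequential contradiction is just the contrapositive of the paper's direct continuity argument for $V(t)$). The one place you go beyond the paper is the explicit treatment of the case $F'=\emptyset$, where you use the fact from Lemma \ref{min} that the facet normals of $\Omega_t$ are drawn from those of $\Omega$ to rule out a parallel facet appearing for small $t$ --- the paper absorbs this into its convention $\haus[d-1](\emptyset)=0$ without comment, so your version is a slightly more careful rendering of the same proof.
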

\begin{proof}
	It suffices to show the second statement since then the first statement will follow from Minkowski's Theorem.
	
	\medskip
	
	Let $F$ be a non-symmetric facet of $\Omega$, and choose a facet $F'$ parallel to $F$ with $\haus[d-1](F)\neq \haus[d-1](F')$. By Minkowski's Theorem, such a facet is guaranteed to exist. Define $$
  V(t):=|\haus[d-1](F(t))-\haus[d-1](F'(t))|.$$ By Lemma \ref{continter} and \eqref{continuity}, $\haus[d-1](F(t))$ and $\haus[d-1](F'(t))$ are continuous at $0$, hence $V(t)$ is continuous at $0$. So
$$
V(0)=|\haus[d-1](F(0))-\haus[d-1](F'(0))|=|\haus[d-1](F)-\haus[d-1](F')|>0,
$$
thus we can choose some $\epsilon>0$ such that $V(t)>0$ for $|t|<\epsilon$. Choosing $\epsilon$ smaller, this holds true for the compact ball $|t|\leq \epsilon$. Thus
$$
|\haus[d-1](F(t))-\haus[d-1](F'(t))|=V(t)>0.
$$
This complete the proof.
\end{proof}
We remark that the condition on $t$ cannot be removed. The following example shows that intersection of a non-symmetric polytope and its translate may become symmetric for sufficiently large $t$.

\begin{ex}\label{ex2.6}
	Let $\Omega$ be the polytope with five edges and vertices given by $(0,0), (2,0), (2,2), (1,2),$ and $ (0,1)$. It is a square with a top left-hand corner removed and it is clearly non-symmetric.  Consider $t = (-1,-1)$. Then $\Omega\cap (\Omega+t)$ becomes a square with vertices $(0,0),(1,0),(0,1),(1,1)$, so it is symmetric.
\end{ex}

\section{Proof of the main theorem}
Let $\Omega$ be the convex polytope on $\R[d]$. We denote by $\sigma_F(x)$ the surface measure on the facet $F$ of $\Omega$.  Let $n_{F}$ denote the outward unit normal to the facet $F$ on $\Omega$. From Lemma \ref{min}, the corresponding  facet $F(t)$ of $\Omega_t = \Omega \cap (\Omega+t)$ shares the same normal vector. The following lemma is a variant of Greenfeld-Lev \cite[Lemma 2.7]{Greenfeld2017} (the case $t=0$). We show that the lower order term can be bounded, independent of $t$.

\begin{lem}\label{bound}
	Let $A(t)$ be a facet of $\Omega_t$, and let $B(t)$ be the parallel facet to $A(t)$ of $\Omega_t$ with outward unit normals $e_1$ and $-e_1$. Then there exists $\omega:=\omega_\Omega>0$, independent of $t$, such that in the cone
	$$
	C(\omega):=\{\lambda\in \R[d]:|\lambda_j|\leq \omega|\lambda_1|\mbox{ for all }2\leq j\leq d\},
	$$
	we have
	\begin{equation}\label{eqn3}
		-2\pi i\lambda_1\widehat{\chi}_{\Omega_t}(\lambda)=\widehat{\sigma}_{A(t)}(\lambda)-\widehat{\sigma}_{B(t)}(\lambda)+G_t(\lambda)
	\end{equation}
	with
	$$
	|G_t(\lambda)| \le \frac{C}{|\lambda_1|}
	$$
	for some constant $C>0$, independent of $t$.
\end{lem}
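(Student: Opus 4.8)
The plan is to derive \eqref{eqn3} from the divergence theorem and then control $G_t(\lambda)$ by exploiting the decay of the Fourier transform of a surface measure in directions transverse to its facet. Applying the divergence theorem on the convex polytope $\Omega_t$ to the vector field $x\mapsto e^{-2\pi i\langle\lambda,x\rangle}e_1$, whose divergence is $-2\pi i\lambda_1 e^{-2\pi i\langle\lambda,x\rangle}$, gives
\begin{equation*}
-2\pi i\lambda_1\widehat{\chi}_{\Omega_t}(\lambda)=\sum_F\langle e_1,n_F\rangle\,\widehat{\sigma}_F(\lambda),
\end{equation*}
the sum running over all facets $F$ of $\Omega_t$. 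Since a convex polytope has at most one facet with any prescribed outward normal, $A(t)$ and $B(t)$ are the only facets with normals $e_1$ and $-e_1$, contributing exactly $\widehat{\sigma}_{A(t)}(\lambda)-\widehat{\sigma}_{B(t)}(\lambda)$. Defining $G_t(\lambda)$ to be the sum of the remaining terms $\langle e_1,n_F\rangle\widehat{\sigma}_F(\lambda)$ over facets with $n_F\neq\pm e_1$ then yields \eqref{eqn3}, and it remains to bound $G_t$.

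For the bound I would first record the elementary fact that, writing $\lambda_\parallel$ for the orthogonal projection of $\lambda$ onto the hyperplane carrying a facet $F$, the magnitude $|\widehat{\sigma}_F(\lambda)|$ depends on $\lambda$ only through $\lambda_\parallel$, since the $n_F$-component of $\lambda$ is constant on $F$ and contributes only a unimodular phase. Two estimates then combine. First, a geometric estimate: if $n_F\neq\pm e_1$, the projection of $e_1$ onto the hyperplane of $F$ has norm $\sqrt{1-\langle e_1,n_F\rangle^2}>0$, so for $\lambda\in C(\omega)$ the triangle inequality gives $|\lambda_\parallel|\ge|\lambda_1|\big(\sqrt{1-\langle e_1,n_F\rangle^2}-(d-1)\omega\big)$. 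Second, a decay estimate: applying the divergence theorem once more, now inside the hyperplane of $F$ to the $(d-1)$-dimensional polytope $F$ and the field $e^{-2\pi i\langle\lambda_\parallel,y\rangle}(\lambda_\parallel/|\lambda_\parallel|)$, reduces $\widehat{\sigma}_F(\lambda)$ to integrals over the $(d-2)$-dimensional edges $E$ of $F$; bounding each such integral by its mass $\haus[d-2](E)$ gives
\begin{equation*}
|\widehat{\sigma}_F(\lambda)|\le\frac{1}{2\pi|\lambda_\parallel|}\sum_{E\subset\partial F}\haus[d-2](E).
\end{equation*}

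The crux is to make every constant uniform in $t$, and this is where Lemma \ref{min} is decisive. By that lemma the facets of $\Omega_t$ have normals drawn from the fixed finite set $\{a_1,\ldots,a_n\}$ of normals of $\Omega$, independent of $t$; hence the quantity $\eta:=\min_F\sqrt{1-\langle e_1,n_F\rangle^2}$, over facets with $n_F\neq\pm e_1$, is a positive number not depending on $t$, and I would fix $\omega:=\eta/(2(d-1))$ so that $|\lambda_\parallel|\ge\tfrac12\eta|\lambda_1|$ throughout $C(\omega)$. Likewise $\Omega_t\subseteq\Omega$ forces each facet $F$ to lie in the intersection of $\Omega$ with a hyperplane, so $F$ is a convex body of diameter at most $\operatorname{diam}(\Omega)$; being contained in a ball of that diameter, its boundary has $(d-2)$-volume bounded by a constant depending only on $\Omega$, by monotonicity of surface area under inclusion of convex bodies. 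Combining the two estimates and summing over the at most $n$ facets produces $|G_t(\lambda)|\le C/|\lambda_1|$ with $C=C(\Omega)$ independent of $t$. The main obstacle is precisely this uniformity: were the normals of $\Omega_t$ allowed to vary with $t$, the cone $C(\omega)$ could degenerate as some $\sqrt{1-\langle e_1,n_F\rangle^2}\to0$, and it is Lemma \ref{min} that rules this out.
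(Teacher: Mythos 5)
Your proposal is correct and follows essentially the same route as the paper: the paper also applies the divergence theorem to obtain the facet decomposition, bounds each $\widehat{\sigma}_{F(t)}$ by $\haus[d-2](\partial F(t))/(2\pi|\lambda||\sin\theta_{\lambda,n_F}|)$ (which is exactly your $\haus[d-2](\partial F)/(2\pi|\lambda_\parallel|)$, quoted from Greenfeld--Lev rather than rederived), and gets uniformity in $t$ from the fact that the normals of $\Omega_t$ come from the fixed set of normals of $\Omega$ together with a uniform bound on $\haus[d-2](\partial F(t))$. The only cosmetic difference is that you prove the two cited Greenfeld--Lev lemmas inline and bound $\haus[d-2](\partial F(t))$ via monotonicity inside $\Omega$, where the paper uses $F(t)\subseteq F$ or $F(t)\subseteq F+t$.
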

\begin{proof}
	By divergence theorem (see \cite[Lemma 2.4]{Greenfeld2017}),
	$$
	-2\pi i\lambda_1\widehat{\chi}_{\Omega_t}(\lambda) = \widehat{\sigma}_{A(t)}(\lambda)-\widehat{\sigma}_{B(t)}(\lambda) +\sum  \langle e_1, n_F\rangle\widehat{\sigma}_{F(t)}(\lambda)
	$$
	where the sum is over all facets $F(t)$ of  $\Omega_t$ except $A(t)$ and $B(t)$. Define $G_t(\lambda)$ to be the sum. By \cite[Lemma 2.6]{Greenfeld2017},
	\begin{equation}\label{eqsigma}
		|\widehat{\sigma}_{F(t)} (\lambda)|\leq \frac{\haus[d-2](\partial F(t))}{2\pi}\cdot\frac{|\lambda|^{-1}}{|\sin \theta_{\lambda, n_F}|} \le \frac{\haus[d-2](\partial F)}{2\pi}\cdot\frac{|\lambda|^{-1}}{|\sin \theta_{\lambda, n_F}|}
	\end{equation}
	where $\theta_{\lambda,n_F}$ is the angle between $\lambda\in\R[d]\backslash \{0\}$.  The second inequality follows from the fact that the facet $F(t)$ is either empty, a subset of the facet $F$, or a subset of the facet $F+t$, so $\haus[d-2](\partial F(t)) \le \haus[d-2](\partial F)$. If $\omega$ is sufficiently small, then for $\lambda\in C(\omega)$, $\theta_{\lambda,n_F}$ is bounded away from 0 and $\pi$ for all $n_F$, so inside the cone $C(\omega)$, summing up all $F$ in (\ref{eqsigma}) shows $G_t(\lambda)$ is bounded by $C|\lambda|^{-1}$ as $|\lambda_1|\to\infty$. As $n_F$ does not depend on $t$, $C$ does not depend on $t$.
\end{proof}

\medskip

We now return to the main problem. Let $g \in L^2({\mathbb R}^d)$. The {\it short time Fourier transform} (STFT) is defined by
$$
V_gg (t,\lambda) := \int g(x)\overline{g(x-t)}e^{-2\pi i \langle\lambda, x\rangle} dx.
$$
If $g = |\Omega|^{-1/2}\chi_{\Omega}$, we have
\begin{equation}\label{V_gg}
	V_gg (t,\lambda) = |\Omega|^{-1}\widehat{\chi}_{\Omega\cap (\Omega+t)} (\lambda) = |\Omega|^{-1}\widehat{\chi}_{\Omega_t} (\lambda).
\end{equation}
We observe a Gabor system ${\mathcal G}(g,\Lambda)$ forms an orthonormal basis if and only if the following holds
\begin{enumerate}
	\item (Mutual Orthogonality) $\Lambda - \Lambda \subset \{(t,\lambda): V_gg(t,\lambda) =0\}$, and
	\item (Completeness) ${\mathcal G}(g,\Lambda)$ is complete in $L^2({\mathbb R}^d)$.
\end{enumerate}
(see \cite{Gabardo2015,Agora2017} for a complete derivation). Furthermore, if ${\mathcal G}(g,\Lambda)$ forms an orthonormal basis, due to the continuity of $V_gg$ at the origin, $\Lambda$ must be {\it uniformly discrete}, i.e. there exists $\delta>0$ such that every ball of radius $\delta$ intersects $\Lambda$ at at most one point. On the other hand, $\Lambda$ is {\it relatively dense} in ${\mathbb R}^{2d}$ in the sense that there exists $R>0$ such that any balls of radius $R$ must intersect $\Lambda$ since the density of $\Lambda$ on ${\mathbb R}^{2d}$ must equal one (see \cite{Ramanathan1995}).

\medskip

Let $S(r)=\{te_1+w:t\in\mathbb{R}, w\in\mathbb{R}^{d}, |w|< r\}$ be the cylinder along the $x_1$-axis.

\begin{lem}\label{nonzero}
	Suppose $\Omega$ is a non-symmetric convex polytope on $\R[d]$ and $g = |\Omega|^{-1/2}\chi_{\Omega}$. There exist  $\epsilon>0$, $R>0$ and $\delta>0$, all independent of $t$, such that
	$$
	V_gg (t,\lambda)\neq 0,\quad \forall\lambda\in S(2\delta)\setminus B_{R}(0)
	$$
	for all $|t|<\epsilon$,
\end{lem}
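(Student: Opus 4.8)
The plan is to invoke \eqref{V_gg} to rewrite $V_gg(t,\lambda)=|\Omega|^{-1}\widehat{\chi}_{\Omega_t}(\lambda)$, reducing the statement to bounding $\widehat{\chi}_{\Omega_t}(\lambda)$ away from zero on the cylinder, uniformly in $t$. First I would fix a non-symmetric facet $F$ of $\Omega$ and rotate coordinates so that its outward normal is $e_1$; let $F'$ be the parallel facet (possibly empty) furnished by Minkowski's Theorem with $\haus[d-1](F)\neq\haus[d-1](F')$. By Theorem \ref{epsi} there is $\epsilon>0$ so that $V(t):=|\haus[d-1](F(t))-\haus[d-1](F'(t))|>0$ for $|t|\le\epsilon$; since $V$ is continuous at $0$, shrinking $\epsilon$ yields a uniform lower bound $V(t)\ge c_0:=V(0)/2>0$ on the closed ball $|t|\le\epsilon$. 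Setting $A(t)=F(t)$ and $B(t)=F'(t)$, with normals $e_1$ and $-e_1$, puts us in position to apply Lemma \ref{bound}.

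Next I would analyze the leading term $\widehat{\sigma}_{A(t)}(\lambda)-\widehat{\sigma}_{B(t)}(\lambda)$. Since $A(t)$ lies in a hyperplane $x_1=c_A$ (with $c_A$ read off from Lemma \ref{min}), writing $\lambda=\lambda_1 e_1+w$ with transverse part $|w|<2\delta$ gives
$$\widehat{\sigma}_{A(t)}(\lambda)=e^{-2\pi i\lambda_1 c_A}\int_{A(t)}e^{-2\pi i\langle w,x\rangle}\,d\sigma(x),$$
and similarly for $B(t)$. On the axis $w=0$ the integral collapses to $\haus[d-1](A(t))$, so
$$\bigl|\widehat{\sigma}_{A(t)}(\lambda_1 e_1)-\widehat{\sigma}_{B(t)}(\lambda_1 e_1)\bigr|=\bigl|\haus[d-1](A(t))-e^{i\phi}\haus[d-1](B(t))\bigr|\ge V(t)\ge c_0,$$
where $\phi=2\pi\lambda_1(c_A-c_B)$ and, since both volumes are nonnegative, the modulus is minimized in phase at $\phi=0$. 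Crucially, this bound is independent of $\lambda_1$ and of $t$.

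Then I would show the off-axis deviation is small, uniformly in $t$. Because $F(t)\subseteq F\cup(F+t)$ (the containment already exploited in Lemma \ref{bound}), the facet volumes $\haus[d-1](A(t)),\haus[d-1](B(t))$ and the diameters of $A(t),B(t)$ are bounded by constants depending only on $\Omega$ and $\epsilon$. Estimating $|e^{-2\pi i\langle w,x\rangle}-1|\le 2\pi|w|\,|x|$ then yields
$$\Bigl|\widehat{\sigma}_{A(t)}(\lambda)-\widehat{\sigma}_{B(t)}(\lambda)-\bigl(\widehat{\sigma}_{A(t)}(\lambda_1 e_1)-\widehat{\sigma}_{B(t)}(\lambda_1 e_1)\bigr)\Bigr|\le K_0\delta$$
with $K_0$ uniform in $t$. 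Choosing $\delta$ so that $K_0\delta<c_0/2$ forces $|\widehat{\sigma}_{A(t)}(\lambda)-\widehat{\sigma}_{B(t)}(\lambda)|\ge c_0/2$ throughout $S(2\delta)$. Finally, for $\lambda\in S(2\delta)$ the constraint $|w|<2\delta$ gives $|\lambda_1|\ge\sqrt{|\lambda|^2-4\delta^2}$, so choosing $R$ large guarantees both $\lambda\in C(\omega)$ (so Lemma \ref{bound} applies) and $C/|\lambda_1|<c_0/4$. Combining, $2\pi|\lambda_1|\,|\widehat{\chi}_{\Omega_t}(\lambda)|\ge c_0/2-C/|\lambda_1|>c_0/4>0$, so $\widehat{\chi}_{\Omega_t}(\lambda)\ne0$ and hence $V_gg(t,\lambda)\ne0$ on $S(2\delta)\setminus B_R(0)$ for all $|t|<\epsilon$.

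The hard part will be maintaining uniformity in $t$ at every stage: the positive lower bound $c_0$ on the volume gap, the constant $K_0$ controlling the off-axis error, and the constant $C$ from Lemma \ref{bound}. All three rest on the compactness of $\{|t|\le\epsilon\}$ together with the containment $F(t)\subseteq F\cup(F+t)$, which simultaneously bounds the facet volumes and the spatial extent of the facets independently of $t$; this is precisely what lets $\epsilon$, $R$, and $\delta$ be chosen once and for all.
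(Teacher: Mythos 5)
Your proposal is correct and follows essentially the same route as the paper: isolate the non-symmetric facet pair $A(t),B(t)$, bound the on-axis difference $|\widehat{\sigma}_{A(t)}-\widehat{\sigma}_{B(t)}|$ from below by the uniform volume gap, control the transverse perturbation to choose $\delta$ independently of $t$, and finish with Lemma \ref{bound} and a large $R$ forcing $S(2\delta)\setminus B_R(0)\subseteq C(\omega)$. The only (harmless) deviation is that you obtain the uniform-in-$t$ choice of $\delta$ by the explicit estimate $|e^{-2\pi i\langle w,x\rangle}-1|\le 2\pi|w||x|$ with uniform bounds on facet diameters and volumes, whereas the paper argues via uniform convergence of $\widehat{\sigma}_{A(t)}\to\widehat{\sigma}_{A(0)}$ through the symmetric-difference bound $\haus[d-1](A(t)\Delta A(0))\to 0$.
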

\begin{proof}
	Take $\epsilon>0$ from Theorem $\ref{epsi}$, and consider $|t|\leq \epsilon$. Let $A(t)$ be the non-symmetric facet of $\Omega_t$ and let $B(t)$ be its parallel facet. Using an affine transformation, assume $A(t)$ and $B(t)$ lie on the hyperplanes $\{x_1=0\}$ and $\{x_1=1\}$ respectively, and let $\eta:=\min_{|t|\leq\epsilon} |\haus[d-1](A(t))-\haus[d-1](B(t))|$. By Theorem \ref{epsi}, $\eta>0$.

    We have
	$$
	\widehat{\sigma}_{A(t)}(\lambda) = \widehat{\chi}_{A(t)}(\lambda_2,...,\lambda_d)                    \quad\mbox{and}\quad\widehat{\sigma}_{B(t)}(\lambda) = e^{2\pi i\lambda_1}\widehat{\chi}_{B(t)}(\lambda_2,...,\lambda_d)
	$$
	where $\chi_{B(t)}$ and $\chi_{A(t)}$ are the characteristic functions of the orthogonal projections of $B(t)$ and $A(t)$ onto $(x_2,...,x_d)$ respectively. Moreover, we can deduce
	\begin{align*}
    \widehat{\chi}_{A(t)}(0)&=\haus[d-1](A(t))\\
    \widehat{\chi}_{A(t)}-\widehat{\chi}_{A(0)}&=\widehat{\chi}_{A(t)\Delta A(0)}
	\end{align*}
    where $\Delta$ is the symmetric difference. This implies that
    $$
    |\widehat{\chi}_{A(t)}(\lambda')-\widehat{\chi}_{A(0)}(\lambda')|\leq \haus[d-1](A(t)\Delta A(0)) \to 0\mbox{ as }t\to 0,  \ \forall \lambda'\in{\mathbb R}^{d-1}
    $$
    so $\widehat{\sigma}_{A(t)}$ converges uniformly to $\widehat{\sigma}_{A(0)}$ on ${\mathbb R}^{d-1}$. Similarly, $\widehat{\sigma}_{B(t)}$ converges uniformly to $\widehat{\sigma}_{B(0)}$.

	Thus, by uniformity, we can choose $\delta>0$, independent of $t$, such that $$|\widehat{\sigma}_{A(t)}(\lambda)-\widehat{\sigma}_{B(t)}(\lambda)|\geq \eta$$ in the cylinder $S(2\delta)$. Using \eqref{V_gg} and Lemma \ref{bound}, we can choose $\omega>0$ and $C>0$, independent of $t$, such that
	$$
	2\pi |\Omega||\lambda_1||V_gg(t,\lambda)| \ge \eta - |G_t(\lambda)| \ge \eta - \frac{C}{|\lambda_1|}
	$$
	in the cone intersection $C(\omega)\cap S(2\delta)$. Taking $R$ large so that $S(2\delta)\setminus B_{R}(0)\subseteq C(\omega)\setminus B_{R}(0)$ and
	$$
    \eta - \frac{C}{|\lambda_1|}>0  \ \mbox{on} \ S(2\delta)\setminus B_{R}(0),
    $$
	we see that
	$$
	V_gg (t,\lambda)\neq 0,\quad \lambda\in S(2\delta)\setminus B_{R}(0)
	$$
	for any $|t|<\epsilon$. Since the constant $C$ and $\omega$ are taken independently of $t$, $R$ is independent of $t$, so we are done.
\end{proof}
We now give the proof for Theorem $\ref{main1}$.
\begin{proof}[Proof of Theorem \ref{main1}]
	We argue by contradiction. Suppose ${\mathcal G}(g,\Lambda)$ forms a Gabor orthonormal basis, and let $\epsilon, \delta, R$ be as defined in the previous lemma.
	
	\medskip
	
	{\it Claim:} For any $\tau,x \in\R[d]$, $\operatorname{card}(\Lambda \cap [(B_{\epsilon/2}(x)\times (S(\delta)+\tau)])<\infty$ where $\operatorname{card}(\cdot)$ denotes cardinality.
	
	\medskip
	
	Suppose not. As $\Lambda$ is uniformly discrete,  one can find  $v=(t,\lambda)$ and $v'=(t',\lambda')\in\Lambda \cap [(B_{\epsilon/2}(\nu)\times (S(\delta)+\tau)]$ with $|\lambda-\lambda'|>R$. But $|t-t'|<\epsilon$ and $\lambda-\lambda'\in S(2\delta)$, Lemma \ref{nonzero} tells us that we must have
	$$
	V_gg (t-t',\lambda-\lambda')\neq 0.
	$$
	This contradicts the mutual orthogonality of $\Lambda$. Thus, $|\lambda'-\lambda|\le R$, otherwise, $\lambda'-\lambda\in S(2\delta)\setminus B_{R}$ which implies $V_gg (t,\lambda)\neq 0$, a contradiction to the mutual orthogonality. This establishes the claim.
	
	\medskip
	
	Now since $\Lambda$ is a relatively dense set, there is a radius $\delta^*>0$ such that every $2d$-ball of radius $\delta^*$ non-trivially intersects $\Lambda$. Consider the set $B_{\delta^*}^d(0)\times S(\delta^*)$ ($d$ denotes the $d$-dimensional ball) covered by finitely many cylinders $B_{\epsilon/2}^d(\nu_i)\times (S(\delta)+\tau_j), 1\leq i,j\leq N$. Then $\operatorname{card}(\Lambda\cap [B_{\delta^*}^d(0)\times S(\delta^*)])<\infty$. However, this implies that  $B_{\delta^{\ast}}^d(0)\times S(\delta^*)$ contains a $2d$-ball of radius $\delta^*$ that does not intersect $\Lambda$, a contradiction to the relative density. It follows that such $\Lambda$ does not exist and our proof is complete.
\end{proof}

\begin{remark}\label{remark_end}
	There is an approach to the Fuglede conjecture used in \cite{Kolountzakis2000} which considers the Fourier transform of the function $f = |\widehat{\chi_{\Omega}}|^2$. This transform is equal to $\chi_{\Omega}\ast\chi_{-\Omega}$, so $\widehat{f}$ has compact support, allowing the use of \cite[Theorem 2]{Kolountzakis2000} to obtain a conclusion about the support of the Fourier transform of $\delta_{\Gamma}$ (as a tempered distribution). Considering $f = |V_gg|^2$ on ${\mathbb R}^{2d}$ with $g = \chi_{\Omega}$,
	$$
	(|V_gg|)^{\widehat{}} (t,\xi) = V_gg (\xi,-t)
	$$
	(see \cite[Equation (11) in page 873]{Grochenig2014}), there is no compactly supported Fourier transform (since the time side is unbounded), so the method in \cite{Kolountzakis2000} cannot be realized without some non-trivial adjustment.
\end{remark}
\section{Acknowledgements}
This work was an undergraduate research project in 2015-16 supported by the Office of Research and Sponsorship Programs (ORSP) in San Francisco State University (Grant No. ST 659). Both authors would like to thank the support from ORSP making this research possible. The authors would also like to thank professor Joseph Gubeladze for providing Example 2.6.

\bibliographystyle{customalpha}
\bibliography{References}
\end{document}